\newtheorem{theorem}{Theorem}
\newtheorem{proposition}[theorem]{Proposition}
\newtheorem{corollary}[theorem]{Corollary}
\theoremstyle{remark}
\numberwithin{theorem}{section}
\newcommand{\R}{\mathbb R}
\newcommand{\Rd}{\mathbb R^d}
\newcommand{\E}{\mathbb E}
\renewcommand{\P}{\mathbb P}
\newcommand{\taud}{\tau_\partial}
\newcommand{\Res}{ \mathcal R}
\newcommand{\footremember}[2]{%
    \footnote{#2}
    \newcounter{#1}
    \setcounter{#1}{\value{footnote}}%
}
\newcommand{\footrecall}[1]{%
    \footnotemark[\value{#1}]%
}
\title{A note on the jump locations of Markov processes}
\author{%
Andi Q. Wang\footremember{Oxf}{University of Oxford}\footnote{Corresponding author. Address: Department of Statistics, 24-29 St Giles, Oxford, OX1 3LB, UK. Email:  \href{mailto:a.wang@stats.ox.ac.uk}{a.wang@stats.ox.ac.uk}.}
\and David Steinsaltz\footrecall{Oxf}}
\date{August 2019}
\begin{document}

\maketitle
\begin{abstract} 
For a continuous-time Markov process, we characterize the law of the first jump location when started from an arbitrary initial distribution, in terms of the invariant distribution of an auxiliary `resurrected' Markov process. This could be of interest in the burgeoning fields of piecewise-deterministic Markov chain Monte Carlo methods and quasi-stationary Monte Carlo methods.

\textit{Keywords:} Killing operator, inhomogeneous Poisson process, quasi-stationary distribution.
\end{abstract}

\section{Introduction}
Consider a continuous-time c\`adl\`ag Markov process $Y=(Y_t)_{t\ge 0}$ evolving on a Polish space $\mathcal X$ with a distinguished strict measurable subset $\partial \subsetneq \mathcal X$. We write $E:= \mathcal X \backslash \partial$, and derive the distribution of the process the moment before entering $\partial$ for the first time. We assume that the process enters $\partial$ exclusively through \textit{jumps}, namely we can construct a locally bounded function $\kappa: E\to [0,\infty)$, where $\kappa(x)$ dictates the rate of transfer from $x \in E$ to $\partial$. In other words, we can express the first hitting time, $\taud$, of the process into $\partial$ as
\begin{equation}
    \taud = \inf\left \{t\ge 0: \int_0^t \kappa(Y_s)\dif s\ge \xi\right\},
\label{eq:taud}
\end{equation}
where $\xi \sim \text{Exp}(1)$ and is independent of $Y$. Of course, this $\taud$ is a stopping
time, that is, a Markov
random time. $\taud$ can be interpreted as the first arrival time of an inhomogeneous Poisson process with (stochastic) rate function $t\mapsto \kappa(Y_t)$.

We assume that for any starting point, jumping in finite time is certain; that is, for any $x\in E$,
\begin{equation*}
    \P_x (\taud <\infty) = 1.
\end{equation*}

%We would like to understand the distribution of the process $Y$ immediately before entering $\partial$. 
Since we are interested only in the behaviour of the process up until $\taud$, without loss of generality we can assume that $\partial$ is a cemetery (absorbing) state, and imagine $\kappa$ to be a killing rate. In this work we will make connections with the established theory of \textit{quasi-stationarity}, the asymptotic behaviour of such killed Markov processes, conditional on survival. For an introduction to this area, see \citet{Collet2013}.

%This soft killing differs from the `hard' killing that is frequently studied, where the process is killed instantaneously upon reaching a boundary or some other set of forbidden states.

Recall that a probability distribution $\pi$ on $E$ is \textit{quasi-stationary} if for all $t\ge 0$, 
\begin{equation}
    \P_\pi (Y_t \in \cdot \,|\tau_\partial >t) = \pi(\cdot),
\label{eq:QSD}
\end{equation}
where $\P_\pi$ denotes the law of the process $Y$ with initial distribution $Y_0\sim\pi$.

In this note we characterize the distribution of the location at which the process was killed, that is, the distribution of $Y_{\taud-}$.
%by expressing its distribution in terms of the invariant distribution of an auxiliary Markov process. 
%Our result extends a result of \citet{Dynkin1969} for discrete-time countable-state-space Markov chains.
We offer an interpretation for the general case in terms of the invariant distribution of an auxiliary `resurrected' Markov process, which returns to the state space after being killed. 

\sloppy The recent developments in computational statistics of \textit{piecewise-deterministic Markov chain Monte Carlo} (PD-MCMC) methods and \textit{quasi-stationary Monte Carlo} (QSMC) methods, for instance \citet{Vanetti2017}, \citet{Fearnhead2018} and \citet{Pollock2016}, have particularly motivated the study of such Markov processes. 
%Our results are of particular interest to practitioners implementing QSMC methods, such as the ones described in \citet{Pollock2016} and \citet{Wang2018AnApprox}. 
For these methods, the aim is to simulate a continuous-time stochastic process whose behaviour alters at the arrival times of an inhomogeneous Poisson process precisely as in \eqref{eq:taud}. Practically, the simulation of these arrival times is typically performed using Poisson thinning: points are drawn from some dominating Poisson process, and these are accepted or rejected depending on the true killing rate at those times. (Even in cases where the true killing rate is potentially unbounded this can still be done through localization techniques; see for instance \citet{Pollock2016}.)
Implementations of PD-MCMC and QSMC methods will thus typically include the limiting locations $Y_{\taud -}$ at the times \eqref{eq:taud} as a free by-product. Given that we have access to these points, we may expect to be able to use them for statistical inference, once we understand
how their distribution is related to the the process that generates them. This particular application will be discussed further in Section \ref{subsec:MC_meth}.

The laws of jump locations and resurrected processes have been considered in many works previously, generally in discrete spaces, see for instance the recent \citet{Kuntz2019} and references therein and \citet[Section 4.4.1]{Collet2013}. We will provide some links to related work in Section \ref{subsec:related}. Our contribution here is to give a simple derivation for an analogous result in this present general state-space setting, connecting related notions of exit locations, quasi-stationarity and resurrected processes, in a form amenable for the recent developments in continuous-time Monte Carlo as mentioned above.

\section{Mathematical background}
We assume that we have a c\`adl\`ag \textit{right process} $\tilde Y=(\tilde\Omega, \tilde{\mathcal F}, \tilde{\mathcal F}_t, \tilde Y_t, \tilde\theta_t, \tilde \P_x)$ evolving in continuous time on a Polish space $ E$. We assume that under $\tilde \P_x$, $Y$ is \textit{unkilled}; $\tilde \P_x$ is heuristically `the law of $Y$ without jumps out of $E$'. We will augment the process with jumps out of $E$. 
The technical definition of a right process can be found in \citet[Chapter 20]{Sharpe1988}. Intuitively, right processes are an abstract class of strong Markov processes which contains most right-continuous Markov processes of practical interest, such as diffusions, L\' evy processes, Feller processes and deterministic right-continuous flows (see \citet[Exercise (8.8) and Chapter 9]{Sharpe1988}).
%As in \citet{Collet2013}, we assume that $Y$ is a standard process in the sense of Definition 9.2 of \citet{Blumenthal1968}, so in particular it is strong Markov. 
%We write $\P_x^0$ for the law of this unkilled process started at $\tilde Y_0 = x$.

We take $\tilde Y$ to be the canonical realisation of the process; namely $\tilde \Omega$ is the space of c\`adl\`ag paths mapping $[0,\infty) \to E$ equipped with the cylinder $\sigma$-algebra. The shift maps $(\tilde \theta_t)$ are then defined in the usual way.

We now define an augmented process that evolves like $Y$, including jumps out of $E$, but which records the location at which $Y$ jumped. Let $\mathcal X := \{0,\partial\} \times E$. $\Omega$ is taken to be the space of c\`adl\`ag paths $[0,\infty)\to \mathcal X$ which admit $\{\partial\}\times E$ as a trap: $\omega(s)=\omega(\taud)$ for all $s\ge \taud(\omega)$, where $\taud := \inf\{t\ge0: Z_t(\omega) \in \{\partial\}\times E\}$, $Z_t(\omega)=\omega(t) \in \mathcal X$ being the coordinate process. We will write $Z_t = (e_t, Y_t)$, with $e_t \in \{0,\partial\}$ and $Y_t \in E$. $\Omega$ is equipped with the cylinder $\sigma$-algebra, and the shift maps $(\theta_t)$ are defined on $\Omega$ in the usual way.

On $\Omega$ we define killing operators $k_t: \Omega \to \Omega$ for each $t \ge 0$ as follows. The use of such killing operators was initiated by \citet{Azema1973}. Given $\omega = (e,\tilde \omega) \in \Omega$, a path $[0,\infty)\to \mathcal X$, define a new path $k_t(\omega)\in \Omega$ by
\begin{equation*}
        k_t(\omega)(s) = 
    \begin{cases}
            \omega(s)    & \text{if } t < \taud(\omega), 0\le s <t, \\
            (\partial, \tilde \omega(t))    & \text{if } t <  \taud(\omega),\quad s \ge t,\\
            \omega(s)    & \text{if } t \ge  \taud(\omega), \quad s\ge 0.
    \end{cases}
\end{equation*}
Intuitively, $k_t$ takes a path $\omega$ and `kills it at time $t$', sending it instantaneously to the trap, provided it hasn't already been killed prior to $t$.

Consider the map $\psi: \tilde \Omega \to \Omega, \tilde \omega \mapsto \psi(\tilde \omega)$, with $\psi(\tilde\omega)(s) = (0, \tilde \omega(s))$ for each $s\ge 0$. The unkilled laws $\P_x^0$ on $\mathcal X$ are defined to be the images of the laws $\tilde \P_x$ under $\psi$. Under $\P_x^0$, the trajectories, which are defined on the augmented state space $\mathcal X$, are almost surely unkilled (never entering $\{\partial\}\times E$).

Now to define the precise killing mechanism, we make use of the multiplicative functional formulation of killing as described in \citet[Chapter 61]{Sharpe1988}. We briefly review this here. Recall that a nonnegative stochastic process $(m_t)$ is a \textit{multiplicative functional} (MF) provided that for each $s, t \ge 0$,
\begin{equation*}
    \P^0_x\left (\omega: m_{t+s}(\omega) \neq m_t(\omega) m_s (\theta_t \omega)\right) = 0.
\end{equation*}
In this work we will be concerned with the $\P_x^0$-almost surely continuous decreasing right MF defined for each $t\ge 0$ by
\begin{equation*}
    m_t := \exp\left (-\int_0^t \kappa(Y_s)\dif s \right ).
\end{equation*}
Almost sure continuity of the paths $t \mapsto m_t$ follows from the fact that $Y$ is c\`adl\`ag and $\kappa$ is locally bounded. Since $Y$ is c\`adl\`ag the corresponding random measure $(-\dif m_t)$ on $[0, \infty)$ satisfies
\begin{equation*}
    -\dif m_t = \kappa(Y_t) \exp\left (-\int_0^t \kappa(Y_s)\dif s \right ) \dif t,
\end{equation*}
$\P_x^0$-almost surely.

Now we can define a new law $\P_x$ on $\mathcal X$, under which $Y$ is killed at rate $\kappa$, corresponding to the MF $(m_t)$. As in formula (61.2) of \citet{Sharpe1988}, for a bounded measurable function $H$ on $\Omega$, define for each $x \in \{0\}\times E$, $\P_x$ via
\begin{equation}
    \P_x (H) := \P_x^0 \int_0^\infty H \circ k_t\, (-\dif m_t).
    \label{eq:def_PH}
\end{equation}
 There is no mass at $\infty$, since by assumption killing happens almost surely in finite time.
Under $\P_x$, the process $Y$ is killed at rate precisely $-\dif m_t/m_t=\kappa(Y_t)$ (\citet[Exercise (61.9)]{Sharpe1988}), and its lifetime is $\P_x$-almost surely finite by assumption. We are then interested in the law of $Y_{\taud}$ under $\P_x$, where $x \in \{0\}\times E$.
From formula (61.3) of \citet{Sharpe1988} 
we have the relation
\begin{equation}
    \P_x\left (H\, 1_{\{t< \taud\}}\right ) = \P_x^0 (H \,m_t).
    \label{eq:MF_ident}
\end{equation}
for nonnegative measurable functions $H$ on $\mathcal F_t$.

Until recently, specific study of Markov processes with soft killing, as in \eqref{eq:taud}, have been relatively neglected in the literature on quasi-stationarity. Key contributions in this area for the continuous-state-space setting have been \citet{Steinsaltz2007}, \citet{Kolb2012b}, \citet[Section 4.4]{Champagnat2017}, \citet{Velleret2018} and the recent work on QSMC methods, \citet{Pollock2016} and \citet{Wang2019}. The results of these works all function in the diffusion
setting relevant to QSMC methods, which are the main area of application 
anticipated for the present work.

Within the quasi-stationarity literature, questions about the exit locations have also received little attention. One exception is the following elegant result.
\begin{theorem}
(\citet[Proposition 2]{Martinez2008}, repeated in \citet[Theorem 2.6]{Collet2013}.) Let $\pi$ be a probability measure on $\{0\}\times E$ which is a quasi-stationary distribution for the process $Z$. Then $\taud$ and $Y_{\taud}$ are $\P_\pi$-independent random variables.
\label{thm:2.1}
\end{theorem}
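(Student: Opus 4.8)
The plan is to combine the quasi-stationarity identity \eqref{eq:QSD} with the Markov property of $Z$ at a fixed deterministic time. The guiding observation is that, on the survival event $\{\taud>t\}$, the process has not yet been killed, so both the residual survival time and the eventual jump location are determined entirely by the post-$t$ trajectory; quasi-stationarity then tells us that, conditionally on $\{\taud>t\}$, the state $Y_t$ is again distributed as $\pi$, so this residual behaviour is statistically identical to a fresh start from $\pi$. Intuitively this is exactly why forgetting how much time has elapsed leaves the jump location unchanged, which is the independence to be proved.

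First I would record the cocycle decomposition of $\taud$: on $\{\taud>t\}$ we have $\taud = t+\taud\circ\theta_t$ and, because killing only switches the flag $e_t$ and freezes the $E$-coordinate at its current value, $Y_{\taud} = Y_{\taud}\circ\theta_t$. Hence for bounded measurable $f:E\to\R$,
\[
    f(Y_{\taud})\,1_{\{\taud>t\}} = \big(f(Y_{\taud})\big)\circ\theta_t \cdot 1_{\{\taud>t\}}.
\]
Since $\{\taud>t\}\in\mathcal F_t$, applying the Markov property and then using that on $\{\taud>t\}$ we have $Z_t=(0,Y_t)$ gives
\[
    \E_\pi\!\big[f(Y_{\taud})\,1_{\{\taud>t\}}\big] = \E_\pi\!\big[1_{\{\taud>t\}}\,\E_{Y_t}[f(Y_{\taud})]\big].
\]

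Next I would invoke quasi-stationarity. By \eqref{eq:QSD} the conditional law of $Y_t$ given $\{\taud>t\}$ under $\P_\pi$ is $\pi$, so the right-hand side factorizes as
\[
    \P_\pi(\taud>t)\int_E \E_y[f(Y_{\taud})]\,\pi(\dif y) = \P_\pi(\taud>t)\,\E_\pi[f(Y_{\taud})].
\]
Thus $\E_\pi[f(Y_{\taud})\,1_{\{\taud>t\}}] = \E_\pi[f(Y_{\taud})]\,\P_\pi(\taud>t)$ for every $t\ge0$ and every bounded measurable $f$; in particular taking $f\equiv1$ recovers the familiar fact that $\taud$ is exponentially distributed under $\P_\pi$. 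To upgrade this to full independence, fix $f\ge0$ and compare the two finite measures $C\mapsto\E_\pi[f(Y_{\taud})\,1_{\{\taud\in C\}}]$ and $C\mapsto\E_\pi[f(Y_{\taud})]\,\P_\pi(\taud\in C)$ on the Borel sets of $[0,\infty]$. They agree on the $\pi$-system $\{(t,\infty]:t\ge0\}$, which generates the Borel $\sigma$-algebra, so by Dynkin's lemma they agree everywhere, yielding $\E_\pi[f(Y_{\taud})g(\taud)]=\E_\pi[f(Y_{\taud})]\,\E_\pi[g(\taud)]$ for all bounded measurable $g$, i.e. the claimed independence.

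The only genuinely delicate step is the path identity $Y_{\taud}=Y_{\taud}\circ\theta_t$ on $\{\taud>t\}$, since one must check that the killing construction through the operators $k_t$ and the multiplicative functional $(m_t)$ is compatible with the shift $\theta_t$ in precisely the way the Markov property requires; this is where the right-process structure and the definition \eqref{eq:def_PH} of $\P_x$ are really used, so that $\P_{Z_t}$ coincides with $\P_{Y_t}$ and the trap makes $\taud$ additive under shifts. Everything after that is a clean consequence of \eqref{eq:QSD} together with a routine monotone-class argument.
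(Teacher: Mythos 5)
Your proof is correct, but there is nothing in the paper to compare it against: the paper states Theorem \ref{thm:2.1} as a quoted result of \citet[Proposition 2]{Martinez2008} (repeated in \citet[Theorem 2.6]{Collet2013}) and gives no proof of its own. Your argument --- the shift identities $\taud = t+\taud\circ\theta_t$ and $Y_{\taud}=Y_{\taud}\circ\theta_t$ on $\{\taud>t\}$, the Markov property at a fixed time, the defining property \eqref{eq:QSD} of a quasi-stationary distribution to factorize the resulting expression, and a $\pi$-system argument to pass from survival sets $(t,\infty]$ to all Borel sets --- is essentially the classical proof found in those references, and it is legitimate in the present setting because the killed process defined through \eqref{eq:def_PH} is again a right process (Sharpe, Chapter 61), so the Markov property you invoke does hold, and the trap structure of $\{\partial\}\times E$ gives exactly the path identities you state. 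One parenthetical slip: taking $f\equiv 1$ in your identity $\E_\pi\left[f(Y_{\taud})\,1_{\{\taud>t\}}\right]=\E_\pi\left[f(Y_{\taud})\right]\P_\pi(\taud>t)$ yields only the tautology $\P_\pi(\taud>t)=\P_\pi(\taud>t)$, not the exponentiality of $\taud$; that fact \eqref{eq:QSD_killing_exp} instead follows from the same Markov/QSD argument applied to $1_{\{\taud>s\}}\circ\theta_t$, which gives $\P_\pi(\taud>t+s)=\P_\pi(\taud>t)\,\P_\pi(\taud>s)$. This slip is harmless, since it plays no role in the independence argument.
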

%To put this in to a form relevant for this present work, consider augmenting the process $Y$ to obtain $Z=(e, Y^{\taud})$ on $\{0,\partial\}\times E$ 
%where $e_t = 0$ for $t<\taud$ and $e_t = \partial$ for $t\ge \taud$ and
%$Y^{\taud}_t = Y_{t\wedge \taud-}$ is the (left limit of) the stopped process.

\noindent \citet{Martinez2008} and \citet{Collet2013} derive the relation
\begin{equation}
    \frac{\dif}{\dif t} \P_\pi(Z_t\in \{\partial\} \times A)\bigg|_{t=0} = \theta(\pi)\P_\pi(Z_{\taud }\in \{\partial\}\times A) = \theta(\pi)\P_\pi(Y_{\taud}\in A)
\label{eq:pi_Y_T_A}
\end{equation}
for any measurable set $A\subset E$, where $\theta(\pi)$ in this case is given by
\begin{equation*}
    \theta(\pi) = \int_E \kappa(x)\pi(\dif x)<\infty.
\end{equation*}
$\theta(\pi)$ must necessarily be finite, since $\pi$ otherwise could
not be a quasi-stationary distribution (see, for instance, Theorem 2.2 of \citet{Collet2013}). $\theta(\pi)$ is the quasi-stationary killing rate: we have
\begin{equation}
    \P_\pi (\taud > t) = e^{-\theta(\pi)t}\quad \forall t \ge 0.
\label{eq:QSD_killing_exp}
\end{equation}
In particular, \eqref{eq:pi_Y_T_A} allows us to characterize the distribution of $Y_{\taud}$ under $\P_\pi$, which by Theorem \ref{thm:2.1} is independent of the time $\taud$.

\begin{proposition}
For any measurable $A\subset E$,
\begin{equation*}
    \P_\pi(Y_{\taud}\in A) = \frac{\int_A \kappa(x)\pi(\dif x) }{\theta(\pi)}.
\end{equation*}
\label{Prop:Y_T_pi}
\end{proposition}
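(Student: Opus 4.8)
The plan is to combine the two equalities in \eqref{eq:pi_Y_T_A} with the closed form for $\theta(\pi)$ given immediately above the statement. The key observation is that the rightmost expression in \eqref{eq:pi_Y_T_A} already isolates $\theta(\pi)\,\P_\pi(Y_{\taud}\in A)$, so it suffices to evaluate the leftmost time-derivative explicitly and show that it equals $\int_A \kappa(x)\,\pi(\dif x)$. Dividing through by $\theta(\pi)$, which is finite and strictly positive since $\pi$ is a quasi-stationary distribution, then yields the claimed formula.

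First I would compute $\frac{\dif}{\dif t}\P_\pi(Z_t\in\{\partial\}\times A)\big|_{t=0}$ from first principles. The event $\{Z_t\in\{\partial\}\times A\}$ means that killing has already occurred by time $t$ (so $e_t=\partial$) and that the recorded jump location $Y_t=Y_{\taud-}$ lies in $A$. Using the killing-operator construction \eqref{eq:def_PH}, applied to the indicator $H=1_{\{Y\in A\}}$ evaluated at the kill location, together with the expression $-\dif m_s=\kappa(Y_s)\exp(-\int_0^s\kappa(Y_u)\dif u)\dif s$ for the killing measure, I would write
\begin{equation*}
    \P_\pi(Z_t\in\{\partial\}\times A) = \P_\pi^0\int_0^t 1_{\{Y_s\in A\}}\,\kappa(Y_s)\exp\!\left(-\int_0^s\kappa(Y_u)\dif u\right)\dif s.
\end{equation*}
Differentiating in $t$ and evaluating at $t=0$ collapses the inner exponential to $1$ (since $\int_0^0\kappa=0$) and picks out the integrand at $s=0$, giving $\P_\pi^0\big(1_{\{Y_0\in A\}}\,\kappa(Y_0)\big)$. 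Because $Y_0\sim\pi$ under both $\P_\pi$ and the unkilled law $\P_\pi^0$ (the two laws agree at time zero), this equals $\int_A\kappa(x)\,\pi(\dif x)$, which is exactly the desired numerator.

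I would then chain this computation against the middle term of \eqref{eq:pi_Y_T_A}, namely $\theta(\pi)\,\P_\pi(Y_{\taud}\in A)$, and divide by $\theta(\pi)$ to conclude. The finiteness and positivity of $\theta(\pi)=\int_E\kappa\,\dif\pi$ are guaranteed by the quasi-stationarity of $\pi$, as noted in the excerpt, so the division is legitimate.

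The main obstacle I anticipate is justifying the interchange of differentiation and the expectation $\P_\pi^0$, i.e. making the one-sided derivative at $t=0$ rigorous. This requires a dominated-convergence or fundamental-theorem-of-calculus argument for the integral $t\mapsto\P_\pi^0\int_0^t 1_{\{Y_s\in A\}}\kappa(Y_s)m_s\,\dif s$, using the local boundedness of $\kappa$ and the right-continuity of the paths $s\mapsto Y_s$ to ensure that the integrand is right-continuous at $s=0$ with value $1_{\{Y_0\in A\}}\kappa(Y_0)$. Provided $\kappa$ is integrable against $\pi$ near the start (which follows from $\theta(\pi)<\infty$), this limit is controlled; alternatively, since \eqref{eq:pi_Y_T_A} is already established in \citet{Martinez2008} and \citet{Collet2013}, one may simply quote it and reduce the proof to substituting the explicit form of $\theta(\pi)$, which makes the statement essentially a corollary.
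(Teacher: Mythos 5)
Your route differs from the paper's. The paper proves this Proposition as a special case of Theorem \ref{Thm:Y_taud_gen}, i.e. $\P_\pi(Y_{\taud}\in A) = \pi\Res(\kappa 1_A)$, and then uses the quasi-stationary semigroup identity $\pi P_s(f) = e^{-\theta(\pi)s}\pi(f)$ to evaluate the resolvent as an explicit exponential integral. You instead formalize the paper's stated heuristic: compute the time-derivative appearing in \eqref{eq:pi_Y_T_A} and divide by $\theta(\pi)$. Your starting identity,
\begin{equation*}
    \P_\pi\bigl(Z_t\in\{\partial\}\times A\bigr) = \P_\pi^0\int_0^t 1_A(Y_s)\,\kappa(Y_s)\, m_s \dif s,
\end{equation*}
is correct, and is exactly the finite-horizon version of the computation in the paper's proof of Theorem \ref{Thm:Y_taud_gen}. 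The problems lie in how you finish.

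The genuine gap is your justification of the derivative at $t=0$. You argue that $s\mapsto 1_A(Y_s)\kappa(Y_s)m_s$ is right-continuous at $s=0$ ``using the local boundedness of $\kappa$ and the right-continuity of the paths.'' This fails: $A$ is an arbitrary measurable set and $\kappa$ is merely measurable and locally bounded, so neither $1_A$ nor $\kappa$ is continuous, and $Y_s\to Y_0$ does \emph{not} imply $1_A(Y_s)\kappa(Y_s)\to 1_A(Y_0)\kappa(Y_0)$. The repair is to integrate out \emph{before} differentiating: by Tonelli and \eqref{eq:MF_ident}, $\P_\pi(Z_t\in\{\partial\}\times A) = \int_0^t \pi P_s(\kappa 1_A)\dif s$, and quasi-stationarity --- which, notably, you never invoke in this step even though it is the standing hypothesis --- gives $\pi P_s(\kappa 1_A) = e^{-\theta(\pi)s}\int_A\kappa\dif\pi$, a continuous function of $s$, so the fundamental theorem of calculus yields the derivative $\int_A\kappa\dif\pi$ at $t=0$. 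A second, related issue: your fallback of ``simply quoting \eqref{eq:pi_Y_T_A} and substituting the explicit form of $\theta(\pi)$'' does not suffice, because \eqref{eq:pi_Y_T_A} only equates the derivative with $\theta(\pi)\P_\pi(Y_{\taud}\in A)$; it does not evaluate the derivative, and that evaluation is precisely the missing content. Finally, note that once you have $\P_\pi(Z_t\in\{\partial\}\times A) = \int_0^t e^{-\theta(\pi)s}\dif s \int_A\kappa\dif\pi$, you no longer need \eqref{eq:pi_Y_T_A} at all: letting $t\to\infty$ (monotone convergence, since $\{Z_t\in\{\partial\}\times A\}$ increases to $\{Y_{\taud}\in A\}$ up to a null set) gives $\P_\pi(Y_{\taud}\in A) = \int_A\kappa\dif\pi\,/\,\theta(\pi)$ directly --- which is, in substance, the paper's own proof via Theorem \ref{Thm:Y_taud_gen} specialized to $\mu=\pi$.
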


\noindent This states that the law of $Y_{\taud}$ under $\P_\pi$ is (proportional to) $\kappa(x)\pi(\dif x)$.\vspace{\baselineskip}

\begin{proof}
     Heuristically, this follows from the relation \eqref{eq:pi_Y_T_A} and the definition of $\taud$ in \eqref{eq:taud}. Formally, it follows 
     as a special case of our main result Theorem \ref{Thm:Y_taud_gen}.
\end{proof}

The goal of this note is to characterize the distribution of $Y_{\taud}$ under $\P_\mu$, when started in some arbitrary initial distribution $\mu$ on $\{0\}\times E$.
We write $P_t(x,\dif y)$ for the sub-Markovian transition kernel of the killed process $Y$ under $\P_x$, which defines a semigroup. That is, for $x \in E$, $f: E \to \R$,
\begin{equation*}
    P_t(x,f)=\int_E f(y)P_t(x,\dif y)=\P_x\left [f(Y_t) \, 1_{\{\taud > t\}}\right ]
\end{equation*}
whenever this integral makes sense.

Recall that the resolvent operator (at 0) $\Res$, mapping nonnegative measurable functions to nonnegative measurable functions, is defined by
\begin{equation}
    \Res f(x) = \int_0^\infty P_t(x,f)\dif t.
\label{eq:Res_def}
\end{equation}
This is the Green's function, as described 
for instance in \citet{Dynkin1969}.
Using \eqref{eq:MF_ident} and exchanging the order of integration (by Tonelli's Theorem),
\begin{equation*}
    %\Res \kappa(x) = \E_x \left[\int_0^{\taud} \kappa(Y_s)\dif s\right] = \E_x[\xi]=1.
    \Res \kappa(x) = \P_x^0 \left[\int_0^\infty \kappa(Y_s) m_s\dif s\right] = \P_x^0\left[\int_0^\infty (-\dif m_s) \right]=1.
\end{equation*}

%\begin{assumption}
%    For the constant function $1: x\mapsto 1$, 
%    \begin{equation*}
%        \mu\Res 1= \E_\mu[\taud]<\infty.
%    \end{equation*}
%    actually we probably don't need this?
%\end{assumption}
%A sufficient condition for this is that there exists some $\delta >0$ such that outside of some compact set $A\subset E$, $\kappa(x) \ge \delta$.

Given a probability measure $\mu$ on $E$ we consider the $\mu$-{resurrected process}. The $\mu$-resurrected process is a c\`adl\`ag (unkilled) Markov process $X=(X_t)_{t\ge 0}$ evolving on $E$. This process evolves according to the law of the killed process $Y$, except that at killing events, which similarly occur at rate $t\mapsto \kappa(X_t)$, the location is resampled according to the fixed measure $\mu$; it then evolves independently from there. This process can be carefully constructed explicitly via the techniques of \citet[Chapter II.14]{Sharpe1988} on concatenated processes, where it is shown that the resulting concatenated process is also a right process.

Such processes have long been associated with the study of quasi-stationarity, {\em cf.}, for instance \citet{Bartlett1960, Darroch1965} and more recently \citet{Barbour2010, Barbour2012}. They were utilised to particularly great effect in \citet{Ferrari1995}, where they were used to prove the existence of quasi-stationary distributions for discrete-state-space continuous-time Markov chains. They have also been applied to discrete time on general (compact) spaces in \citet{Benaim2018} and continuous spaces in \citet{Wang2018AnApprox}. This process is analogous to the immediate-return procedure of \citet{Doob1945}, going back to the very foundations of the field itself. 
 Resurrecting processes are also considered in \citet{Pakes1993}, to classify recurrence of states for Markov chains. The invariant measure of such a process will provide an interpretation of our main result. Here we consider the resurrected process in continuous time on general state spaces.

When generators are available (for instance in \citet{Wang2018AnApprox} for diffusions on compact manifolds), the generator of the resurrecting process $X$ can be expressed as
\begin{equation*}
    L_X f(x)= L^0 f(x)+ \kappa (x)\int \left(f(y)-f(x)\right)\mu(\dif y)
\end{equation*}
for functions $f$ in the appropriate domain, where $L^0$ denotes the infinitesimal generator of the unkilled Markov process $\tilde Y$.

%Recall that $L^0$ denotes the infinitesimal generator of the unkilled Markov process $\tilde Y$, so $L^\kappa =L^0-\kappa$ is the generator of the sub-Markovian process $Y$ killed at time \eqref{eq:taud}. The generator of the Markovian FR($\mu$) process $X$ is then
%\begin{equation*}
%    L_X f(x)= L^0 f(x)+ \kappa (x)\int \left(f(y)-f(x)\right)\mu(\dif y)
%\end{equation*}
%for functions $f$ in the appropriate domain.

It has been demonstrated in various recent works that an invariant distribution for the resurrecting process of certain processes is given by $\Pi(\mu)$, where
\begin{equation*}
    \Pi(\mu)(f)\propto \mu \Res f=\int_E \mu(\dif x) \Res(f)(x) ,
\end{equation*}
provided that this is integrable, namely that
\begin{equation}
    \mu\Res 1 = \E_\mu[\taud]<\infty.
    \label{eq:taud_integrable}
\end{equation}
See, for instance, \citet{Collet2013, Benaim2018, Wang2018AnApprox}. Here is a general theorem, couched in terms of general regenerative processes, which possess regenerations times $T_1<T_2<\dots$ at which times the process `starts afresh'; see \citet{Asmussen2003} for the precise definitions. For our $\mu$-resurrecting process, the times at which the process is reborn according to $\mu$ are of course regeneration times.

\begin{theorem}[Theorem~1.2, \citet{Asmussen2003}]
    Assume that an undelayed regenerative process $(X_t)$ has a metric state space, right-continuous paths and nonlattice inter-regeneration times with finite mean. Then the limiting distribution $\nu$ exists and is given by
    \begin{equation*}
        \nu(f) = \frac{\E\left[\int_0^{T_1} f(X_s)\dif s\right]}{\E[T_1]}.
    \end{equation*}
\end{theorem}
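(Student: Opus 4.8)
The plan is to treat this as a classical consequence of renewal theory, specifically the \emph{key renewal theorem}. Write $g(t) := \E[f(X_t)]$, take $f$ bounded and measurable (reducing to $f \ge 0$ by decomposition if desired), and let $F$ denote the common distribution function of the inter-regeneration times, with mean $\mu := \E[T_1] < \infty$. The strategy is to derive a renewal equation for $g$ by conditioning on the first regeneration time $T_1$, solve it using the renewal measure, and then extract the limit $\lim_{t\to\infty} g(t) = \nu(f)$ via the key renewal theorem. The claim that $\nu$ is a genuine probability distribution then follows on taking $f \equiv 1$, which gives $\nu(1) = \mu/\mu = 1$.

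First I would set up the renewal equation. Splitting on whether the first regeneration has occurred by time $t$,
\begin{equation*}
    g(t) = \E[f(X_t);\, T_1 > t] + \E[f(X_t);\, T_1 \le t].
\end{equation*}
On the event $\{T_1 \le t\}$ the \emph{undelayed} regenerative property says that, conditionally on $T_1 = s \le t$, the shifted process $(X_{s+u})_{u \ge 0}$ is an independent copy of $(X_u)_{u \ge 0}$; hence the second term equals $\int_0^t g(t-s)\,\dif F(s)$. Writing $z(t) := \E[f(X_t);\, T_1 > t]$, this is the renewal equation $g = z + g * F$, whose unique locally bounded solution is $g = z * U$ with $U := \sum_{n \ge 0} F^{*n}$ the renewal measure.

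Next I would apply the key renewal theorem. Since $F$ is nonlattice with finite mean $\mu$, provided $z$ is directly Riemann integrable the theorem yields
\begin{equation*}
    g(t) \longrightarrow \frac{1}{\mu}\int_0^\infty z(s)\,\dif s \qquad (t \to \infty).
\end{equation*}
It remains to identify the right-hand side with $\nu(f)$. By Tonelli's theorem,
\begin{equation*}
    \int_0^\infty z(s)\,\dif s = \int_0^\infty \E\left[f(X_s)\,1_{\{s < T_1\}}\right]\dif s = \E\left[\int_0^{T_1} f(X_s)\,\dif s\right],
\end{equation*}
so that $g(t) \to \E\left[\int_0^{T_1} f(X_s)\,\dif s\right]/\E[T_1] = \nu(f)$, as required.

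The main obstacle is verifying the direct Riemann integrability of $z$, which is precisely where the hypotheses are used. For bounded $f$ one has the domination $|z(t)| \le \|f\|_\infty\, \P(T_1 > t)$, and the tail function $t \mapsto \P(T_1 > t)$ is nonincreasing with $\int_0^\infty \P(T_1 > t)\,\dif t = \E[T_1] = \mu < \infty$, hence directly Riemann integrable. Direct Riemann integrability of $z$ itself then follows once one checks the requisite regularity of $z$ on finite intervals (continuity almost everywhere, or Riemann integrability on each $[0,T]$), which is where right-continuity of the paths together with the metric — hence nicely measurable — state space enters. Handling this regularity carefully, rather than the renewal-theoretic skeleton which is routine, is the delicate part of the argument.
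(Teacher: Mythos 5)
The paper itself contains no proof of this statement: it is quoted directly from \citet{Asmussen2003} (Theorem VI.1.2 of that book), and the proof given there is precisely the renewal-theoretic argument you outline. Your skeleton --- condition on $T_1$ to get the renewal equation $g = z + F * g$ with $z(t) = \E[f(X_t);\,T_1 > t]$, solve it as $g = U * z$ with $U = \sum_{n\ge0}F^{*n}$, apply the key renewal theorem, and identify $\frac{1}{\mu}\int_0^\infty z(s)\,\dif s$ with $\nu(f)$ by Tonelli --- is the standard and intended route, so there is no divergence of method to report.

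There is, however, one genuine gap, and it sits exactly at the step you flagged as ``delicate'' and deferred. You open with ``take $f$ bounded and measurable,'' and for such $f$ the argument cannot be completed: the conclusion $\E[f(X_t)] \to \nu(f)$ is in fact \emph{false} at that level of generality under nonlattice-only hypotheses. Concretely, take i.i.d.\ cycle lengths uniform on $\{1,\sqrt{2}\}$ (nonlattice, finite mean) and let $X_t$ be the age process, $X_t = t - (\text{last regeneration time} \le t)$. At integer times $t = n$ the age lies almost surely in the countable set $H = \{a + b\sqrt{2} : a,b \in \mathbb{Z}\}$, whereas the limit law $\nu(\dif x) \propto \P(T_1 > x)\,\dif x$ is absolutely continuous; hence $\P(X_n \in H) = 1$ does not converge to $\nu(H) = 0$. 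Setwise convergence requires spread-out cycle distributions, so no amount of path regularity can deliver direct Riemann integrability of $z$ for discontinuous $f$. The repair is to restrict to bounded \emph{continuous} $f$: then $t \mapsto f(X_t)$ is right-continuous, $1_{\{T_1 > t+h\}} \to 1_{\{T_1 > t\}}$ as $h \downarrow 0$, so $z$ is right-continuous by bounded convergence; a right-continuous function has at most countably many discontinuities, so $z$ is a.e.\ continuous, and combined with your domination $\abs{z(t)} \le \norm{f}_\infty \P(T_1 > t)$ this gives d.R.i., and the key renewal theorem yields weak convergence (this is where the metric state space enters: bounded continuous functions are measure-determining there). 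The formula $\nu(f) = \E[\int_0^{T_1} f(X_s)\,\dif s]/\E[T_1]$ for general bounded measurable $f$ then follows not from renewal theory but from a monotone class argument, both sides being finite measures in $f$ that agree on bounded continuous functions.
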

This expression in our setting is precisely equivalent to $\Pi(\mu)$.
So for our $\mu$-resurrected process, provided that \eqref{eq:taud_integrable} holds and $\taud$ has finite mean under $\E_\mu$ and is nonlattice, i.e. not concentrated on a set of the form $\{\delta, 2\delta, \dots\}$, $\Pi(\mu)= \mu\Res(\cdot)/\mu\Res 1$ will define its invariant distribution. 

\section{Main Result}
\begin{theorem}
For any measurable $A\subset E$,
\begin{equation*}
    \P_\mu (Y_{\taud} \in A) = \mu\Res(\kappa 1_A).
\end{equation*}
\label{Thm:Y_taud_gen}
\end{theorem}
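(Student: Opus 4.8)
The plan is to reduce everything to the definition \eqref{eq:def_PH} of $\P_x$ in terms of the unkilled law $\P_x^0$ and the killing operators $k_t$, to recognize the resulting time-integral as the resolvent applied to $\kappa 1_A$, and finally to integrate over the initial distribution $\mu$. Working first from a fixed starting point $x \in \{0\}\times E$, I would apply \eqref{eq:def_PH} to the bounded measurable functional $H = 1_{\{Y_{\taud}\in A\}}$ on $\Omega$. The crucial observation is the action of $k_t$ on this functional: under $\P_x^0$ the paths are $\P_x^0$-almost surely unkilled, so $\taud = \infty$ $\P_x^0$-a.s., and hence for every $t$ the killed path $k_t\omega$ has killing time exactly $t$ with terminal location $Y_{\taud}(k_t\omega) = \tilde\omega(t) = Y_t(\omega)$. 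Consequently $H\circ k_t = 1_A(Y_t)$ holds $\P_x^0$-almost surely, and using $-\dif m_t = \kappa(Y_t)\,m_t\,\dif t$ the defining formula becomes
\[
\P_x(Y_{\taud}\in A) = \P_x^0\left[\int_0^\infty 1_A(Y_t)\,\kappa(Y_t)\,m_t\,\dif t\right].
\]

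Next I would interchange the expectation and the time integral by Tonelli's theorem, which is valid since all integrands are nonnegative, obtaining $\int_0^\infty \P_x^0[(\kappa 1_A)(Y_t)\,m_t]\,\dif t$. For each fixed $t$ the function $(\kappa 1_A)(Y_t)$ is nonnegative and $\mathcal F_t$-measurable, so the multiplicative-functional identity \eqref{eq:MF_ident} applies and gives $\P_x^0[(\kappa 1_A)(Y_t)\,m_t] = \P_x[(\kappa 1_A)(Y_t)\,1_{\{t<\taud\}}] = P_t(x,\kappa 1_A)$. Integrating in $t$ and invoking the definition \eqref{eq:Res_def} of the resolvent yields $\P_x(Y_{\taud}\in A) = \Res(\kappa 1_A)(x)$. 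The theorem then follows by integrating this pointwise identity against $\mu$ and using $\mu\Res f = \int_E \mu(\dif x)\,\Res f(x)$.

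I expect the only genuinely delicate step to be the identification $H\circ k_t = 1_A(Y_t)$, which rests on a careful reading of the three cases in the definition of $k_t$ together with the fact that $\P_x^0$-almost every path lies in the unkilled regime $t<\taud$; one should also confirm that $1_{\{Y_{\taud}\in A\}}$ is a genuinely bounded measurable functional on the c\`adl\`ag path space, so that \eqref{eq:def_PH} is applicable. The remaining steps, namely the Tonelli interchange and the appeal to \eqref{eq:MF_ident}, are routine once this reduction is in place.
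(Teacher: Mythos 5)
Your proposal is correct and takes essentially the same route as the paper's own proof: apply \eqref{eq:def_PH} to $H = 1_{\{Y_{\taud}\in A\}}$, identify $H\circ k_t = 1_A(Y_t)$ using $-\dif m_t = \kappa(Y_t)m_t\,\dif t$, interchange integrals by Tonelli, invoke \eqref{eq:MF_ident} to obtain $P_t(x,\kappa 1_A)$, integrate in $t$ to get $\Res(\kappa 1_A)(x)$, and finally integrate against $\mu$. If anything, you are slightly more careful than the paper, since you explicitly justify the key identification $Y_{\taud}(k_t\omega) = Y_t(\omega)$ via the fact that $\P_x^0$-almost every path is unkilled, a step the paper leaves implicit.
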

\noindent Given the preceding discussion, this immediately implies the following interpretation. 
\begin{corollary}
When the probability distribution $\Pi(\mu)$ exists --- that is, when \eqref{eq:taud_integrable} holds --- for any measurable $A\subset E$,
 \begin{equation*} 
     \P_\mu (Y_{\taud} \in A) = \frac {\int_A \kappa(x) \Pi(\mu)(\dif x)}{\int_E \kappa(x) \Pi(\mu)(\dif x)}.
 \end{equation*}
\label{cor:Pi_mu}
\end{corollary}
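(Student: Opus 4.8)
The plan is to derive Corollary~\ref{cor:Pi_mu} as a direct algebraic consequence of the main result, Theorem~\ref{Thm:Y_taud_gen}, together with two facts already established above: the definition $\Pi(\mu)(f) = \mu\Res f / \mu\Res 1$, which is a genuine probability measure precisely when \eqref{eq:taud_integrable} holds, and the normalization identity $\Res\kappa \equiv 1$ computed earlier via \eqref{eq:MF_ident} and Tonelli.

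First I would rewrite both the numerator and the denominator of the claimed expression in terms of the resolvent. By the definition of $\Pi(\mu)$, the numerator is $\int_A \kappa\,\dif\Pi(\mu) = \Pi(\mu)(\kappa 1_A) = \mu\Res(\kappa 1_A)/\mu\Res 1$, while the denominator is $\int_E \kappa\,\dif\Pi(\mu) = \Pi(\mu)(\kappa) = \mu\Res\kappa/\mu\Res 1$.

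Next I would simplify the denominator using $\Res\kappa \equiv 1$: since $\mu$ is a probability measure, $\mu\Res\kappa = \mu(1) = 1$, so the denominator collapses to $1/\mu\Res 1$. Forming the ratio then cancels the common normalizing factor $\mu\Res 1$, which is finite and nonzero exactly under \eqref{eq:taud_integrable}, leaving $\mu\Res(\kappa 1_A)$. Invoking Theorem~\ref{Thm:Y_taud_gen} identifies this with $\P_\mu(Y_{\taud}\in A)$, which is the claim.

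There is no substantive obstacle here beyond bookkeeping: all the probabilistic content resides in Theorem~\ref{Thm:Y_taud_gen}, and the corollary is purely a matter of recognizing that the normalizing constant of $\Pi(\mu)$ is precisely what makes the reweighted measure $\kappa\,\Pi(\mu)$ reproduce the jump law. The only point deserving care is ensuring $\mu\Res 1 < \infty$, so that $\Pi(\mu)$ is well-defined and the cancellation of the normalizing factor is legitimate; this is exactly the hypothesis \eqref{eq:taud_integrable}, and it is also what guarantees that $\Pi(\mu)$ coincides with the invariant distribution of the $\mu$-resurrected process, giving the stated interpretation.
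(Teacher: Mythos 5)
Your proposal is correct and follows exactly the route the paper intends: the paper presents the corollary as an immediate consequence of Theorem \ref{Thm:Y_taud_gen} combined with ``the preceding discussion,'' namely the definition $\Pi(\mu)(f)=\mu\Res f/\mu\Res 1$ and the identity $\Res\kappa\equiv 1$, which is precisely the bookkeeping you carry out. Your explicit cancellation of the normalizing factor $\mu\Res 1$ and the observation that the denominator collapses to $1/\mu\Res 1$ simply spell out what the paper leaves implicit.
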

\noindent That is, the law of $Y_{\taud}$ under $\P_\mu$ is (proportional to) $\kappa(\cdot) \Pi(\mu)$; \textit{cf.} Proposition \ref{Prop:Y_T_pi}.\vspace{\baselineskip}

\begin{proof}[Proof of Theorem \ref{Thm:Y_taud_gen}]
    Since $\P_\mu = \int \mu(\dif x)\P_x$, it suffices to prove the result for point masses, for $x\in E$. We are concerned with the probability of
    $\{Y_{\taud}\in A\}$ under $\P_x$. By \eqref{eq:def_PH}
    \begin{align*}
    \P_x\bigl( Y_{\taud}\in A \bigr) &= \P_x^0 \int_0^\infty 1_A\left (Y_{\taud}\left (k_t(\omega)\right)\right )(-\dif m_t)\\
        &= \P_x^0 \int_0^\infty 1_A(Y_{t}) \,\kappa(Y_t) m_t \dif t \\
        &= \int_0^\infty  \P_x^0 \left[ 1_A(Y_{t})\kappa(Y_t) m_t \right] \dif t\\
        &= \int_0^\infty \P_x [1_A(Y_{t})\kappa(Y_t) 1_{\{\taud >t\}}] \dif t\\
        &= \int_0^\infty P_t(x, \kappa 1_A) \dif t\\
        &= \Res(\kappa 1_A)(x).
    \end{align*}
    In the first line we used the definition of the measure $\P_x$, \eqref{eq:def_PH}. We then make use of Tonelli's theorem to exchange the order of integration and the identity \eqref{eq:MF_ident}. 
    %Since $Y$ is c\`adl\`ag its set of discontinuities is countable. Hence we continue the equalities,
\end{proof}

\section{Examples and remarks}
\subsection{Related work}
\label{subsec:related}
Related questions concerning exit locations have been considered in many places. We review some of these here.

An analogue of Theorem \ref{Thm:Y_taud_gen} appears in \citet[Section 6]{Dynkin1969} 
in the context of Martin boundary theory for the discrete-time, discrete state space
setting. Dynkin considered the last
exist of a Markov chain $(X_n)_{n\in \mathbb N}$, with transition probabilities $p(n,x,y) := \P_x(X_n = y)$ from a distinguished subset $D$. Setting
$\tau := \sup\{n: x_n \in D\}$,
he noted that
\begin{equation}
    \P_x (X_\tau = y) = \sum_{m=0}^\infty p(m,x,y) \P_y(\tau = 0).
    \label{eq:discrete-time}
\end{equation}
This is analogous to our operator $\Res$ in \eqref{eq:Res_def}.

\citet{Dynkin1969b} went on to explore, in a very general continuous-time Markov process setting,
the intimate connection between the distribution of exit locations of Markov processes and \textit{excessive functions} (generalisations of nonnegative superharmonic functions). Here Dynkin rigorously derives the existence of exit locations as a limit in the \textit{Martin compactum} -- a compact space in which the original space is densely embedded, on which the \textit{Martin function} is continuously extended. Heuristically, the Martin function is a scaled version of the Green's function,
and the fundamental properties that
Dynkin describes include a characterisation of the space of `admissible' exit locations in terms of the Martin function. 
The paper explores the fundamental
theoretical connections with the Green's kernel --- what we have called the resolvent ---
but does not concern itself with an explicit or practically applicable 
description of the exit distributions.
%a highly technical piece of work, which is less concerned about the practical question of identifying the specific distribution of the exit location.

Versions of Theorem \ref{Thm:Y_taud_gen} for discrete spaces in continuous time have also been given in various places, for example in \citet{Syski1992} and recently in \citet{Kuntz2019}, where an elegant algorithm to approximate related distributions is also formulated. 
%For the general case, the formula in Theorem 3.1 could also be derived via the machinery of \citet[Section 61]{Sharpe1988}, in the language of multiplicative functionals. 

%Again, we emphasize that our present contribution is the derivation of a simple formula, of a form amenable for recent developments in continous-time Monte Carlo methods, and connecting the related notions of exit locations, quasi-stationarity and the invariant distribution of resurrected processes.

Our contribution in this work is a simple derivation of an analogous continuous-time continuous-state-space result to \eqref{eq:discrete-time}, connecting the related notions of exit locations, quasi-stationarity and the invariant distributions of resurrected processes, in a form amenable for the recent developments in continous-time Monte Carlo methods.  
Such PDMP and quasi-stationary Monte Carlo methods rely upon the same continuous state space setting with inhomogeneous Poisson clock that we have presented here.

%with the practically useful connections with the theory of quasi-stationarity and the interpretation in terms of the invariant distribution of the FR($\mu$) process.

\subsection{General observations}
The quantity $\mu \Res(\kappa 1_A)$ appearing in Theorem \ref{Thm:Y_taud_gen} has a natural interpretation. Since
\begin{equation*}
    \mu\Res(\kappa 1_A) = \E_\mu\left[\int_0^{\taud} \kappa(Y_t) 1\{Y_t \in A\}\dif t\right],
\end{equation*}
we can think of this as the `average amount of killing picked up by the process in set $A$ when started from $\mu$'. Since the average total killing picked up (that is, when $A=E$) is 1, this indeed will correspond to the probability of being killed in $A$.

Theorem \ref{Thm:Y_taud_gen} is valid even in situations where there is \textit{no} quasi-stationary distribution. For example, consider the case of a continuous-time simple symmetric random walk on $\mathbb Z$, where at position $i\in \mathbb Z$, transitions to states $i-1$ and $i+1$ occur at rate 1, and there are no other transitions. In addition, we have a uniform killing rate of $\kappa(i)=1$ for all $i \in \mathbb Z$. Clearly there can be no quasi-stationary distribution, since conditioning on survival reverts us to the simple symmetric random walk on $\mathbb Z$, which has no stationary distribution. Let us take the initial distribution to be $\mu=\delta_0$. Theorem \ref{Thm:Y_taud_gen} is still valid, and tells us that the distribution of the location at which the particle is killed is the same as the invariant distribution of the simple symmetric random walk on $\mathbb Z$ which also has additional jumps to 0 at a uniform rate 1. This process has a unique invariant distribution, as it is uniformly ergodic in the sense of \citet{Down1995}. The invariant distribution can be computed exactly, through routine calculations involving the $Q$-matrix.

An example where Theorem \ref{Thm:Y_taud_gen} is still applicable even when $\Pi(\mu)$ is not well-defined (so Corollary \ref{cor:Pi_mu} does not apply) is a continuous-time simple symmetric random walk on $\mathbb Z$ as before, except that killing occurs \textit{only} at a finite collection of states $\{1,2,\dots,k\}$, with at least some of the killing
rates $\kappa_i$, $i=1, \dots, k$ nonzero. We take $\mu=\delta_0$. Since the unkilled process is recurrent, we know that killing will occur almost surely at a finite time, from any initial position. However, since the process is \textit{null} recurrent, the expectation of the return times to a given state are infinite. In particular this implies that $\E_\mu[\taud]=\infty$. In this case Theorem \ref{Thm:Y_taud_gen} still holds, hence the distribution of the killing point is the stationary distribution, reweighted by $\kappa$, of the
Markov process on $\{1,\dots,k\}$, with transition rates
$q_{ij} = 1$ if $|i-j| = 1$, except for $q_{21}=1+\kappa_2$, and $q_{i1} = \kappa_i$ for $i\ge 3$.

 %Of course, we can also apply Theorem \ref{Thm:Y_taud_gen} to the FR($\mu$) process itself. This tells us that the distribution of the death locations are independently proportional to $\kappa(x)\Pi(\mu)(\dif x)$, provided $\Pi(\mu)$ is well-defined. 
 
 %In \citet{Kuntz2019} the authors consider the practical problem of estimating the exit distributions and occupation measures of Markov chains on discrete spaces. In their Theorem 2.6 they state the discrete-space equivalent of our formula in Theorem \ref{Thm:Y_taud_gen}, and provide references.

\subsection{Quasi-stationary case}
Consider the situation where there is a quasi-stationary distribution $\pi$ as in \eqref{eq:QSD}.

In this situation, Theorem \ref{Thm:Y_taud_gen} implies Proposition \ref{Prop:Y_T_pi}, since $\pi$ being quasi-stationary implies that there exists $\theta(\pi)$ such that for all non-negative measurable $f$,
\begin{equation*}
     \int \pi(\dif x) P_t(x,f)=  e^{-\theta(\pi)t} \pi(f).
\end{equation*}
This implies that that $\Pi(\pi)=\pi$. Indeed, in many cases $\Pi(\mu)=\mu$ is necessary and sufficient for $\mu$ to be quasi-stationary (see Lemma 4.2 of \citet{Benaim2018}, Proposition 2.9 of \citet{Wang2018AnApprox},  \citet[Section 4.4.1]{Collet2013}).

Furthermore, if in addition $\kappa$ has a positive uniform lower bound
${\kappa(x)\ge \epsilon>0}$ for all $x \in E$, our result can be interpreted as follows: The distribution of $Y_{\taud}$ under $\P_\pi$ can be written as
\begin{equation}
    \frac{\theta'(\pi)}{\theta'(\pi)+\epsilon}\frac{(\kappa(x)-\epsilon) \pi(\dif x)}{\theta'(\pi)} + \frac{\epsilon}{\theta'(\pi)+\epsilon}\pi (\dif x),
\label{eq:mixture}
\end{equation}
where $\theta'(\pi) := \int_E (\kappa(x)-\epsilon) \pi(\dif x) = \theta(\pi)-\epsilon$. The expression \eqref{eq:mixture} is saying that the distribution of the exit location is a mixture of the quasi-stationary distribution $\pi$ and the modified exit location for the process killed at rate $\kappa - \epsilon$. Since the killing time under $\P_\pi$ is exponential with rate $\theta(\pi)$, see \eqref{eq:QSD_killing_exp}, this can be exactly seen as a mixture corresponding to competing independent exponential clocks, at rates $\theta'(\pi)$ and $\epsilon$. This 
observation may have useful practical consequences when implementing QSMC methods, since these often take $\kappa$ to have such a uniform positive lower bound.

    %Throughout this work we have envisaged a Markov process which is absorbed upon entering $\partial \times E$. However our results can be applied more generally to characterise the distribution of the location of a continuous-time Markov process on a space $E$ immediately before hitting a given subset $A \subset E$, provided transitions into $A$ are governed by a rate function $\kappa: E\backslash A \to [0,\infty)$. For example, in a discrete state space $E$, if we fix a strict subset $A\subset E$, Theorem \ref{Thm:Y_taud_gen} can be used to characterise the law of final location $X_{\tau_A -}\in E\backslash A$ before entering into the set $A$ for the first time.\todo{this strikes me as a much more general way of framing the result. Perhaps this is a better way of looking at it in the first place?}

\subsection{Application to Monte Carlo methods}
\label{subsec:MC_meth}
As mentioned in the introduction, this note is motivated by
novel Monte Carlo methods that rely upon the simulation of continuous-time Markov processes, with randomly timed jumps defined as in \eqref{eq:taud}. Examples include the Bouncy Particle Sampler (BPS) of \citet{Bouchard-Cote2018}, the Zig-Zag process of \citet{Bierkens2019} and the ScaLE algorithm of \citet{Pollock2016}.
The former two Markov processes evolve deterministically on $\Rd$, except
the velocity jumps at random event times selected as in \eqref{eq:taud}. For the ScaLE algorithm of \citet{Pollock2016}, the dynamics are simple Brownian motion on $\Rd$, and at killing times the particles are re-weighted.

It is carefully noted in \citet[Section 6.1]{Bierkens2019} that the distribution of the Zig-Zag process at the switching times (that is, precisely the distribution of $Y_{\taud}$) is not equal to the invariant distribution, but rather is `biased towards the tails of the target distribution'. Our work here confirms that, as one would expect, this bias is precisely the event rate.

Practically speaking, the greatest challenge in simulating such processes is often the difficulty of selecting the locations at which the Markov process jumps. 
This is commonly done by simulating the jump time $\taud$ as in \eqref{eq:taud} via Poisson thinning, and then given the time $\taud$, simulating the corresponding location $Y_{\taud-}$. Our result provides a direct characterisation of the distribution of $Y_{\taud-}$. Regardless of the specific implementation, practitioners will typically have access to the values $Y_{\taud-}$.
%Our result gives a characterisation of the distribution of the location of the process infinitesimally before such a jump. Practically, the main difficulty in simulating such processes is precisely the simulation of such locations, and as such our result offers an insight into how this simulation could be done. 
%Unfortunately, Theorem \ref{Thm:Y_taud_gen} and Corollary \ref{cor:Pi_mu} are not immediately applicable, since sampling from the object $\Pi(\mu)$ would be as difficult as the original problem. Nevertheless, we hope that our work here provides another perspective on this practical problem.

For example, in the Zig-Zag and BPS the process moves along rays at a constant velocity. For a given fixed start point and fixed initial direction, by reparameterising we can imagine that the particle moves along $\mathcal X =[0,\infty)$ with fixed velocity $+1$. For a given locally bounded event rate $\kappa: \mathcal X \to [0,\infty)$, arrival events are defined as in \eqref{eq:taud}. Since we are starting from a fixed location, in terms of the resurrecting process, we take the rebirth distribution $\mu$ simply to be $\delta_0$. 

Simple calculations with the generator of this resurrecting process show that its invariant distribution is given by $\Pi(\mu)(\dif x)\propto \exp(-\int_0^x \kappa(z) \dif z)\dif x$, when this is integrable and sufficient regularity properties hold on $\kappa$ (to ensure that use of the generator is valid; such conditions have been formulated in \citet{Durmus2018}). This expression is consistent with what is given in the supplementary material of \cite{Bouchard-Cote2018} for the BPS. In the context of PD-MCMC -- given the specific choice of the event rate $\kappa$, which is typically minus the derivative of the log-density -- this formula will recover the form of the target distribution restricted to the ray.

%Taking the BPS as an example, given an initial distribution $\mu$ on the location-velocity space, the corresponding FR$(\mu)$ process would be a `single-trajectory' process, which is initialised in $\mu$, and jumps back to distribution $\mu$ at the event rate $\lambda(\cdot)$.

Finally, we speculate that new Monte Carlo methods could be designed
to exploit the understanding of the \textit{exact} distribution of the exit locations offered by our result.

\section*{Acknowledgements}
A. Q. Wang would like to thank M. Pollock for suggesting this question and P. J. Fitzsimmons for suggesting the use of killing operators. Research of A. Q. Wang is supported by EPSRC OxWaSP CDT through grant EP/L016710/1.

\bibliography{exit_loc}
\bibliographystyle{plainnat}

\end{document}